\documentclass[12pt]{amsart}
\usepackage{amsmath,amsthm,amssymb,mathrsfs}
\usepackage[applemac]{inputenc}

\textwidth=455pt \evensidemargin=8pt \oddsidemargin=8pt
\marginparsep=8pt \marginparpush=8pt \textheight=640pt
\topmargin=-20pt

\newcommand{\fr}{\mathcal{F}}

\newcommand{\C}{\mathbb{C}}
\newcommand{\R}{\mathbb{R}}
\newcommand{\N}{\mathbb{N}}

\numberwithin{equation}{section}
\newtheorem{theorem}{Theorem}[section]
\newtheorem{lemma}[theorem]{Lemma}

\theoremstyle{remark}

\newtheorem{example}[theorem]{Example}
\newtheorem{definition}[theorem]{Definition}

\makeatletter
\@namedef{subjclassname@2010}{%
  \textup{2010} Mathematics Subject Classification}
\makeatother

 \begin{document}

\title[A  Criterion for quasinormality in $\C^n$ ]{ A  Criterion for quasinormality in $\C^n$}

\thanks{The research work of the first author is supported by research fellowship from UGC( India). The research of the second author is supported by Minor Research Project grant of UGC(India). }

\author[G. Datt]{Gopal Datt}
\address{Department of Mathematics, University of Delhi,
Delhi--110 007, India} \email{ggopal.datt@gmail.com }

\author[S. Kumar]{Sanjay Kumar}

\address{Department of Mathematics, Deen Dayal Upadhyaya College, University of Delhi,
Delhi--110 015, India }
\email{sanjpant@icloud.com; sanjpant@gmail.com}

\begin{abstract}
In this article, we give  a Zalcman type renormalization result  for the quasinormality of a family of holomorphic functions on a domain in  $\C^n$ that takes values in  a complete complex Hermitian manifold.
\end{abstract}
\keywords{analytic set, holomorphic mapping, normal family, quasi-normal family.}
\subjclass[2010]{32A19}
\maketitle
\section{Introduction}
The convergence of a family of functions  always has  far reaching consequences. In his path breaking paper of 1907 ~\cite{M07},  Montel gave a result on the convergence of the family of holomorphic functions which states that a sequence of uniformly bounded holomorphic functions has a subsequence that is locally uniformly convergent. Later in 1912 (see ~\cite{M12}),  he introduced the term {\it normal family} for a family satisfying  this convergence property. In a subsequent paper ~\cite{M22},  he  introduced the notion of quasinormality of a family of functions in one complex variable. All these ideas are well documented in his influential book ~\cite{M27}.  The normality of a family of functions is one of the most fundamental concepts in  function theory of one and several complex variables. It has been extensively used in the study of dynamical properties of functions of one or more complex variables.  In fact, normality plays a vital role in the  Julia-Fatou dichotomy in complex dynamics. In a different direction Beardon and Minda in ~\cite{BM} discuss normal families in terms of maps that satisfy certain types of uniform Lipschitz conditions with respect to various conformal metrics and more background materials can be found in ~\cite{CTC, M27, JLS}. While all theses provide sufficient conditions for normality,  Zalcman in ~\cite{Zalc} proved a striking result that studies consequence of  non-normality. Roughly speaking, it says that in an infinitesimal scaling the family gives a non-constant entire function under the compact-open topology. We  state this renormalization result which has now came to be known as {\it Zalcman's Lemma}: \\

{\bf{Zalcman's Lemma}:} {\it A family $\mathcal F$ of functions meromorphic $($analytic$)$ on the unit disc $\Delta$ is not normal if and only if there exist
\begin{enumerate}
\item[$(a)$]{a number r, $0<r<1$}
\item[$(b)$]{points $z_j, |z_j|<r$}
\item[$(c)$]{functions $\{f_j\}\subseteq \mathcal F$}
\item[$(d)$]{numbers $\rho_j\rightarrow0^+$}
 \end{enumerate}
such that
\begin{equation}\notag
f_j(z_j+\rho_j\zeta)\rightarrow g(\zeta)
\end{equation}
spherically uniformly $($uniformly$)$ on compact subsets of $\C$, where $g$ is a non-constant meromorphic $($entire$)$ function on $\C$.  }\\

This lemma leads to a heuristic principle in function theory. The principle says that any property which forces an entire function to be a constant will also force a family of holomorphic functions to be normal. The source is Marty's inequality which gives a necessary and sufficient condition for the normality of a family of holomorphic or meromorphic functions on a domain $\Omega\subset\C.$\\

 It is very natural to explore the extension of Zalcman's Lemma in several complex variables.   In \cite{AK},  Aladro and  Krantz gave an analogue of Zalcman's Lemma for families of holomorphic mappings  from  a hyperbolic domain of $\C^n$ into complete complex Hermitian manifold $M$ (see also Lemma 5.1~\cite{Iv}). Their analysis was completed by Thai, Trang and Huong in ~\cite{DDT} which addresses the possibility of compact divergence of  the renormalized map $g_j{(\zeta)}=f_j(z_j+\rho_j\zeta)$.  In the same paper ~\cite{DDT} Thai et. al. also defined the concept of Zalcman space. Loosely speaking a complex space $X$ is  Zalcman space if for each non-normal family of holomorphic mappings of unit disc $\{z\in \C: |z|<1\}$ into $X$, we get a non-constant holomorphic mapping  $g:\C\rightarrow X$ under the compact-open topology after an infinitesimal scaling.  This work is further studied in ~\cite{Thai Duc Thu 15,Trao Trang 07}. In this paper, our goal is to prove an analogue of Zalcman's lemma for quasi-normal families in several complex variables. We have illustrated our results with examples. \\

 The theory of quasinormality is well studied in one complex variable.  Chuang, in his text ~\cite{CTC}, introduced the notion of $Q_m-$normality ($m\geq 0$) as an extension of quasinormality in complex plane, $Q_0$ and $Q_1$-normality are usual normality and quasinormality respectively. Loosely speaking a $Q_m-$normal family on a domain $D$ is  normal outside a subset of $D$ whose $m^{\text{th}}$ order derived set is empty.  He introduced the notion of $\mu_m-$point and established some characterizations of $Q_m-$normality. Roughly speaking a point $z_0\in D$ is a $\mu_0-$point of a family $\fr$ if the family  violates the Marty's Criterion on $z_0$ and $\mu_1-$point is the accumulation point of $\mu_0-$points.  Inductively a $\mu_m-$point is an accumulation point of $\mu_{m-1}-$points.  In this paper we extend the notions of $\mu_1$ and $\mu_2-$points in higher dimensions  whereas we  could not generalize the notion of $\mu_m-$points for $m\geq 3$ in several variables due to the nature of zeros of holomorphic mappings in higher dimensions. It seems that the 'order of quasinormality' as given in one variable is not plausible  in higher dimension. It is interesting to note here, using the  notion of $\mu_m-$points,  Nevo proved a Zalcman type renormalization result for $Q_m-$normal families on planar domains  ~\cite{Nevo}. \\

  In several complex variables, the theory of quasinormality has its origin in  the work of  Rutishauser ~\cite{Rut} and  Fujimoto ~\cite{Fuji}.  In ~\cite{Fuji} Fujimoto extending the work of Rutishauser introduced the notion of  meromorphic convergence. In a recent article ~\cite{Iv},  Ivashkovich and  Neji discuss several notions of convergence namely strong convergence, weak convergence and gamma convergence. It can be seen easily from the definitions that weakly-normal implies quasi-normal. In this paper we have also given a renormalization result for weakly-normal family of holomorphic mappings. It is instructive to note here a survey article ~\cite{DD} by  Dujardin where he gives a sufficient condition for quasinormality of a familly of holomorphic functions from a complex manifold to a compact K\"ahler manifold in terms of a suitable sequence of bidegree (1,1) currents. \\

  \section{Preliminary Definitions and Main Results}
Let $\Omega \subset\C^n$ be an open domain and $\Delta$ be the unit disc in $\C$. If $z\in \Omega$ and $\xi\in\C^n$ then -- by the work of Royden \cite{Roy74} -- the infinitesimal form of the Kobayashi pseudo-metric for $\Omega$ at $z$ in the direction $\xi $ is defined  as:

\begin{align*}
F_{K}^{\Omega}(z, \xi)= \inf_f \bigg \{\frac{\|\xi\|}{\|f'(0)\|}:  f: &\  \Delta\rightarrow  \Omega  \text{ is holomorphic, }  f(0)=z,\\
& \text{and} \  f'(0) \text{ is a constant multiple of } \xi \bigg  \},
\end{align*}
where $\|.\|$ represents Euclidean length. And
The Kobayashi pseudo-distance between $z$ and $w$ in $\Omega$ is defined  as:$$K_{\Omega}(z,w)=\inf_{\gamma}\int_{0}^{1}F_{K}^{\Omega}(\gamma(t), \gamma'(t))dt,$$
where the infimum is taken over $C^1$- curves $\gamma:[0, 1]\rightarrow \Omega$ such that $\gamma(0)=z$ and $\gamma(1)=w.$\\

In this work we shall use the following definition of (Kobayashi) hyperbolicity  which -- as shown by Royden \cite{Roy74} -- is equivalent to the original definition.
\begin{definition}\cite{AK, Roy74}
A domain $\Omega \subseteq\C^n$ is called {\it hyperbolic} at a point $z\in\Omega$ if there is a neighborhood $V$ of $z$ in $\Omega$ and a positive constant $c$ such that \begin{center} $F_K^{\Omega}(y, \xi)\geq c\  \|\xi\|$ for all $y\in V$ and all $\xi\in \C^n.$ \end{center}
We say that  $\Omega$ is {\it hyperbolic } if it is hyperbolic at each point.

\end{definition}

Let $M$ be a complete complex Hermitian manifold of dimension $k$ and let $\mathscr{T}_p(M)$ denotes the complexified tangent space to $M$ at $p$. We denote the metric for $M$ at $p$ in the direction of the vector $\xi\in\mathscr{T}_p(M)$ by $E_M(p; \xi).$ Let $\Omega\subseteq\C^n$ be a hyperbolic domain. We denote the set of all holomorphic functions from $\Omega$ into $M$ by Hol$(\Omega, M).$

\begin{definition}
Let $\fr$ be a family of holomorphic mappings of a domain $\Omega$ in $\C^n$ into a complete complex manifold $M$. $\fr$ is said to be a {\it normal family} on $\Omega$ if $\fr$ is relatively compact in Hol$(\Omega, M)$ in the compact open topology.
\end{definition}
\begin{definition}
Let $X, Y$ be complex spaces and $\fr\subset$ {Hol}$(X, Y)$.  A sequence $\{f_j\}\subset \fr$ is {\it  compactly divergent } if for every compact  $K\subset X$ and for every compact  $L\subset Y$ there is a number $J=J (K, L)$ such that $f_j(K)\cap L=\emptyset$ for all $j\geq J.$
If $\fr$ contains no compactly divergent sequences then $\fr$  is called {\it not compactly divergent.}
\end{definition}

Let $\Omega\subseteq\C^n$ be a domain. A subset $S$ of $\Omega$ is called a {\it complex analytic subset} if for any $z\in\Omega$ there exist a neighborhood $U$ of $z$ and holomorphic functions $f_1,\ldots, f_l$ on $U$ such that $S\cap U=\{z\in U: f_1(z)=\ldots=f_l(z)=0\}$. Notice that analytic subsets are closed and nowhere dense  in $\Omega.$

\begin{definition}
A sequence $\{f_j\}$ of holomorphic mappings from a domain $\Omega\subset \C^n$ into  a complete complex Hermitian manifold $M$ is said to be {\it weakly-regular} on $\Omega$ if any $z\in \Omega$ has a connected neighborhood $U$ with the property that $\{f_j(z)\}$ converges uniformly on compact subsets of $U\setminus E$ or compactly diverges on $U\setminus E,$ where $E\subset U$ is  an  analytic subset of codimension at least $2$.
\end{definition}

\begin{definition}
Let $\fr$ be a family of holomorphic mappings from a domain $\Omega$ in $\C^n$ into  a complete complex Hermitian manifold $M$. $\fr$ is said to be a {\it weakly-normal family}  on $\Omega$ if any sequence  in $\fr$ has a weakly-regular subsequence  on $\Omega.$
\end{definition}
\begin{definition}
A sequence $\{f_j\}$ of holomorphic mappings from a domain $\Omega\subset \C^n$ into  a complete complex Hermitian manifold $M$ is said to be {\it quasi-regular} on $\Omega$ if any $z\in \Omega$ has a connected neighborhood $U$ with the property that $\{f_j(z)\}$ converges uniformly on compact subsets of $U\setminus E$ or compactly diverges on $U\setminus E,$ where $E\subset U$ is  a proper complex analytic subset of $U$.
\end{definition}

\begin{definition}
Let $\fr$ be a family of holomorphic mappings from a domain $\Omega$ in $\C^n$ into  a complete complex Hermitian manifold $M$. $\fr$ is said to be a {\it quasi-normal family}  on $\Omega$ if any sequence  in $\fr$ has a quasi-regular subsequence  on $\Omega.$
\end{definition}

\begin{theorem}\label{Aladro}\cite{Alad, AK}
Let $\Omega \subseteq \C^n$ be a hyperbolic domain. Let $M$ be a complete complex Hermitian manifold of dimension $k$ with metric $E_M.$ Let $\mathcal F=\{f_{\alpha}\}_{\alpha\in A}\subseteq \emph{Hol}(\Omega, M).$ If the family $\mathcal F=\{f_{\alpha}\}_{\alpha\in A}$ is a normal family then for each compact set $L\Subset \Omega$ $($i.e $L$ is relatively compact in $\Omega$ $)$, there is a constant $C_L$ such that for all $z\in L$ and all $\xi\in \C^n$, it holds that
\begin{equation}\label{eq}
\sup_{\alpha \in A}|E_M(f_{\alpha}(z); (f_{\alpha})_{*}(z).\xi)|\leq C_LF^{\Omega}_K(z, \xi).
\end{equation}
\end{theorem}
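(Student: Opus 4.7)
My plan is to prove Theorem~\ref{Aladro} by contradiction, converting a hypothetical failure of the inequality into a contradiction with hyperbolicity via an Arzelà–Ascoli / normal families extraction.

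Suppose the conclusion fails. Then there exists a compact set $L\Subset\Omega$ for which no constant $C_L$ works, so we can find sequences $\{z_j\}\subset L$, $\{\xi_j\}\subset\C^n$, and $\{f_j\}\subseteq\fr$ such that
\begin{equation*}
\bigl|E_M\bigl(f_j(z_j);\,(f_j)_{*}(z_j)\cdot\xi_j\bigr)\bigr| \;>\; j\,F^{\Omega}_K(z_j,\xi_j).
\end{equation*}
Both sides are homogeneous of degree one in $\xi$ (the Kobayashi infinitesimal form scales linearly, and $E_M(p;\,\cdot\,)$ is a Hermitian norm on $\mathscr{T}_p(M)$), so we may normalize $\|\xi_j\|=1$. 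By compactness of $L$ and of the Euclidean unit sphere in $\C^n$, after passing to a subsequence we may assume $z_j\to z_0\in L$ and $\xi_j\to\xi_0$ with $\|\xi_0\|=1$.

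Next I invoke normality. Since $\fr$ is relatively compact in $\text{Hol}(\Omega,M)$ in the compact-open topology, a further subsequence (still denoted $\{f_j\}$) converges to some $f\in\text{Hol}(\Omega,M)$ uniformly on compact subsets of $\Omega$. Holomorphic convergence in several variables propagates to first derivatives (write each component of $f_j$ locally as a power series and differentiate the Cauchy integral representation on a small polydisc around $z_0$), so $(f_j)_{*}(z_j)\to f_{*}(z_0)$ as linear maps $\C^n\to\mathscr{T}_{f(z_0)}(M)$. Combined with $\xi_j\to\xi_0$ and the continuity of the Hermitian metric $E_M$, this yields
\begin{equation*}
\bigl|E_M\bigl(f_j(z_j);\,(f_j)_{*}(z_j)\cdot\xi_j\bigr)\bigr| \;\longrightarrow\; \bigl|E_M\bigl(f(z_0);\,f_{*}(z_0)\cdot\xi_0\bigr)\bigr|,
\end{equation*}
which is a finite real number, call it $A$.

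For the right-hand side I use that $\Omega$ is hyperbolic at $z_0$: by Definition~2.1 there is a neighborhood $V$ of $z_0$ and a constant $c>0$ with $F^{\Omega}_K(y,\xi)\geq c\,\|\xi\|$ for every $y\in V$, $\xi\in\C^n$. For all sufficiently large $j$, $z_j\in V$ and $\|\xi_j\|=1$, hence $F^{\Omega}_K(z_j,\xi_j)\geq c$. The bad inequality therefore forces $j\,c < A$ for all large $j$, which is absurd. This contradiction establishes the required estimate. The main technical point to verify carefully is the joint convergence of $(f_j)_{*}(z_j)\cdot\xi_j$ as a vector in a moving fiber $\mathscr{T}_{f_j(z_j)}(M)$: one reduces to local coordinates near $f(z_0)$ in $M$ (available for all large $j$ by continuity of the limit) where convergence of the image points, of the Jacobian matrices, and of the direction vectors combine, and then one uses continuity of the Hermitian length in the chart.
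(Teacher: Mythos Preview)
The paper does not supply its own proof of this theorem; it is stated as a known result with citations to Aladro \cite{Alad} and Aladro--Krantz \cite{AK}, and is used as a preliminary tool. So there is nothing in the paper to compare your argument against line by line.

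That said, your proof is correct and is essentially the standard argument behind this Marty-type criterion. The steps --- normalize $\xi_j$ by homogeneity, extract convergent subsequences of $z_j$, $\xi_j$, and $f_j$ using compactness of $L$, of the unit sphere, and normality of $\fr$, then use convergence of derivatives together with continuity of $E_M$ to force the left side to a finite limit while hyperbolicity keeps the right side bounded below --- are all sound. One small point worth making explicit: to even normalize $\xi_j$ you need $\xi_j\neq 0$, and to rule out $F_K^{\Omega}(z_j,\xi_j)=0$ (which would make the strict inequality vacuous) you are implicitly already using hyperbolicity near the compact set $L$; this is fine, but you might state it. Your closing remark about working in a coordinate chart near $f(z_0)$ to make sense of convergence of tangent vectors in moving fibers is exactly the right way to handle that technicality.
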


Conversely, if \eqref{eq} holds and if for some $p\in\Omega$,  all $f_{\alpha}(p)$ are in some compact set $Q$ of $M$, then $\fr=\{f_{\alpha}\}_{\alpha\in A}$ is a normal family.\\

Aladro and Krantz  gave an extension of the Zalcman's Lemma to the higher-dimensional setting \cite{AK}. A case missing from the analysis in \cite{AK} was provided by Thai et. al. \cite{DDT}. Their result is as follows:

 \begin{theorem}\label{new}\cite{DDT}
 Let $\Omega$ be a domain in $\C^n.$ Let $M$ be a complete complex Hermitian space. Let $\fr\subset \emph{Hol}(\Omega, M).$ Then the family $\fr$ is not normal if and only if there exist sequences $\{p_j\}\subset{\Omega}$ with $\{p_j\}\rightarrow p_0\in \Omega$,  $\{f_j\}\subset\fr$,  $\{\rho_j\}\subset \R $ with $\rho_j>0$ and $\{\rho_j\}\rightarrow 0$ such that
 \begin{equation}\notag
 g_j(\xi)=f_j(p_j+\rho_j\xi), \qquad \xi\in \C^n
 \end{equation}
 satisfies one of the following two assertions:
 \begin{enumerate}
 \item[(i)] {The sequence $\{g_j\}_{j\geq 1}$ is compactly divergent on $\C^m.$ }
\item[(ii)] {The sequence $ \{g_j\}_{j\geq 1}$ converges uniformly on compact subsets of $\C^n$ to a non-constant holomorphic map $g:\C^n\rightarrow M.$}
 \end{enumerate}
 \end{theorem}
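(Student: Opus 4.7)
The plan is to reduce both directions to the Marty-type criterion of Theorem~\ref{Aladro} via a Zalcman-style maximum-principle rescaling. For $f\in\emph{Hol}(\Omega,M)$ and $z\in\Omega$ set
$$\mathcal{M}(f,z) := \sup_{\xi\in\C^n\setminus\{0\}} \frac{E_M(f(z);\, f_*(z)\xi)}{\|\xi\|},$$
which, on compact subsets of the hyperbolic $\Omega$, is uniformly comparable to the Kobayashi-normalized quantity appearing in~\eqref{eq}. For the easy direction $(\Leftarrow)$, I would assume such sequences exist and, for contradiction, that $\mathcal{F}$ is normal. Extracting $f_{j_k}\to f$ uniformly on compacta, Theorem~\ref{Aladro} gives a uniform bound on $\mathcal{M}(f_{j_k},\cdot)$ on a compact neighborhood of $p_0$; integrating this bound along the segment from $p_{j_k}$ to $p_{j_k}+\rho_{j_k}\xi$ produces $g_{j_k}(\xi)\to f(p_0)$ uniformly on each compact $K'\subset\C^n$, contradicting both the compact divergence required by~(i) and the non-constancy required by~(ii).

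For $(\Rightarrow)$, pick $\{f_j\}\subset\mathcal{F}$ with no subsequence convergent in $\emph{Hol}(\Omega,M)$. If some subsequence is compactly divergent on $\Omega$, then for any fixed $p_0\in\Omega$ and any $\rho_j\to 0^+$ the maps $g_j(\xi)=f_j(p_0+\rho_j\xi)$ are compactly divergent on $\C^n$, so we land in~(i). Otherwise, after passing to a subsequence there exist a compact $L_0\subset M$ and points $q_j\to q_*\in\Omega$ with $f_j(q_j)\in L_0$. If $\mathcal{M}(f_j,\cdot)$ were bounded on every compact subset of $\Omega$, equicontinuity near $q_*$ would keep $\{f_j(q_*)\}$ in a compact subset of $M$, and the converse half of Theorem~\ref{Aladro} would then force $\{f_j\}$ to be normal, a contradiction. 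Hence there is a closed ball $\overline{B(p_0,r)}\Subset\Omega$ on which $\sup_{j,z}\mathcal{M}(f_j,z)=\infty$.

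Now execute the rescaling on $\overline{B(p_0,r)}$. Let $z_j\in B(p_0,r)$ maximize the continuous function $\psi_j(z)=(r-\|z-p_0\|)\,\mathcal{M}(f_j,z)$; unboundedness forces $\psi_j(z_j)\to\infty$. Set $\rho_j=1/\mathcal{M}(f_j,z_j)\to 0^+$ and $g_j(\xi)=f_j(z_j+\rho_j\xi)$. The chain rule yields the identity $\mathcal{M}(g_j,\xi)=\rho_j\,\mathcal{M}(f_j,z_j+\rho_j\xi)$, and combining it with the maximum property $\psi_j(z_j+\rho_j\xi)\leq\psi_j(z_j)$ gives
$$\mathcal{M}(g_j,\xi) \;\leq\; \frac{r-\|z_j-p_0\|}{r-\|z_j+\rho_j\xi-p_0\|} \;\longrightarrow\; 1$$
uniformly on each compact of $\C^n$ (because $\rho_j/(r-\|z_j-p_0\|)=\psi_j(z_j)^{-1}\to 0$), while $\mathcal{M}(g_j,0)\equiv 1$. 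Equicontinuity of $\{g_j\}$ on every compact of $\C^n$ (from the uniform Marty bound and completeness of $M$) then yields, after a further subsequence, either local uniform convergence to a holomorphic $g:\C^n\to M$ or compact divergence on $\C^n$. In the first case $\mathcal{M}(g,0)=1\neq 0$, so $g$ is non-constant and~(ii) holds; in the second~(i) holds. Relabeling $z_j$ as $p_j$ matches the statement of the theorem.

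The main obstacle is that a uniform Marty bound on $\{g_j\}$ yields equicontinuity but not sequential compactness, since $M$ need not be compact: the rescaled sequence can genuinely compactly diverge, which is exactly the case omitted in the original Aladro--Krantz argument and that alternative~(i) is designed to accommodate. The normalization $\mathcal{M}(g_j,0)\equiv 1$ produced by the Zalcman maximum-principle choice of $z_j$ is what guarantees that the limit in~(ii), when it exists, cannot be constant; verifying this non-constancy was the subtle point in the one-variable proof, and the only delicate issue carried over here is the book-keeping of the ratio in the displayed inequality.
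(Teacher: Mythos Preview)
The paper does not prove Theorem~\ref{new}: it is quoted from \cite{DDT} and used as a black box (most notably inside the proof of Lemma~\ref{local}, which invokes Theorem~\ref{new} after a preliminary $1/\sqrt{k}$ rescaling). So there is no ``paper's own proof'' to compare against.

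That said, your proposal is the standard Zalcman maximum-principle argument and is essentially correct. A couple of points worth tightening. First, to get $\psi_j(z_j)\to\infty$ you need the Marty blow-up to occur on a compact set strictly inside $B(p_0,r)$; otherwise the factor $r-\|z-p_0\|$ could kill the growth. This follows from Theorem~\ref{Aladro} applied on a slightly smaller ball, but you should say so. Second, your Case~1/Case~2 split deserves one more line: the negation of ``some subsequence of $\{f_j\}$ is compactly divergent on $\Omega$'' does not literally say ``there exist $L_0$ and $q_j\to q_*$ with $f_j(q_j)\in L_0$''; you obtain that by noting the full sequence is then not compactly divergent, so some compacts $K\subset\Omega$, $L_0\subset M$ satisfy $f_j(K)\cap L_0\neq\emptyset$ infinitely often, and then extracting $q_j\in K$. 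Finally, in the rescaled dichotomy you correctly allow $\{g_j\}$ to compactly diverge even in Case~2 --- this is exactly the lacuna in \cite{AK} that \cite{DDT} filled --- and your normalization $\mathcal{M}(g_j,0)\equiv 1$ is what rules out a constant limit in~(ii).
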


 The main result of this paper provides an analogue of the Zalcman's Lemma for the quasi-normal families. Our main result is as follows:
 \begin{theorem}\label{Main Thm 1}
Let $\Omega\subseteq\C^n$ be a hyperbolic domain. Let $M$ be a complete complex Hermitian manifold of dimension $k$. Let $\fr=\{f_{\alpha}\}_{\alpha\in A}\subseteq \ \text{\emph{Hol}}(\Omega, M)$. The family $\fr$ is not quasi-normal if and only if there exist a subset $E\subset\Omega$ which is either a non-analytic subset or  the closure $\overline E$ has non-empty interior and corresponding to each $p\in E$ there exist
\begin{enumerate}
\item[$(a)$] {a sequence of points $\{w_{j,p}\}_{j=1}^{\infty}\subset \Omega$ such that $w_{j, p}\rightarrow p$.}
\item[$(b)$]{a sequence of functions $\{f_j\}\subset\fr,$}
\item[$(c)$]{a sequence of positive real numbers $\rho_{j, p}\rightarrow 0$, such that}
\end{enumerate}
\begin{center}
$g_j(\zeta)=f_j(w_{j, p}+\rho_{j, p}\xi), \quad \xi\in \C ^n\quad (p\in E)$
\end{center}
satisfies one of the following two assertions.
\begin{enumerate}
\item[(i)]{The sequence $\{g_j\}$ is compactly divergent on $\C^n$.}
\item[(ii)]{The sequence $\{g_j\}$ converges uniformly on compact subsets of $\C^n$ to a non-constant holomorphic map  $g_p:\C^n\rightarrow M.$}
\end{enumerate}
\end{theorem}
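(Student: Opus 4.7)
The plan is to prove both directions; the bulk of the work lies in the forward direction (non-quasi-normal implies the existence of $E$), while the converse reduces to a Baire-type argument exploiting the fact that proper complex analytic subsets are closed and nowhere dense.

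For the forward direction, I would start from a sequence $\{f_j\}\subset\fr$ which admits no quasi-regular subsequence (such a sequence exists by the negation of quasi-normality). A diagonal extraction against a countable basis of relatively compact open subsets of $\Omega$ produces a subsequence, still denoted $\{f_j\}$, together with a maximal open set $\Omega_0\subset\Omega$ on each component of which $\{f_j\}$ either converges uniformly on compact subsets or is compactly divergent. Set $E:=\Omega\setminus\Omega_0$. Since no subsequence of $\{f_j\}$ is quasi-regular, $E$ cannot be locally contained in proper analytic subsets of $\Omega$: otherwise, pasting these local analytic sets with the good behavior on $\Omega_0$ would exhibit $\{f_j\}$ itself as quasi-regular. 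This yields the required dichotomy: either $E$ is non-analytic as a subset of $\Omega$, or $\overline{E}$ has non-empty interior. For each $p\in E$ the sequence $\{f_j\}$ is non-normal at $p$, failing the Marty-type estimate of Theorem \ref{Aladro}, so applying Theorem \ref{new} to $\{f_j\}$ at $p$ (passing, if necessary, to a $p$-dependent subsequence) yields points $w_{j,p}\to p$ in $\Omega$ and scales $\rho_{j,p}\to 0^+$ such that the rescalings $g_j(\zeta)=f_j(w_{j,p}+\rho_{j,p}\zeta)$ either compactly diverge on $\C^n$ or converge locally uniformly to a non-constant holomorphic map $g_p:\C^n\to M$. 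This provides items $(a)$--$(c)$ together with $(i)$ or $(ii)$.

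For the converse, assume the stated data exist but, for contradiction, that $\fr$ is quasi-normal. Then the sequence $\{f_j\}$ of item $(b)$ has a quasi-regular subsequence $\{f_{j_k}\}$, so every $q\in\Omega$ admits a neighborhood $U_q$ and a proper analytic $A_q\subset U_q$ off which $\{f_{j_k}\}$ converges or compactly diverges. For any $p\in E$, conditions $(i)$ and $(ii)$ are preserved along subsequences, so the rescaled $\{g_{j_k}\}$ still satisfies one of them, and Theorem \ref{new} then rules out normality of $\{f_{j_k}\}$ in any neighborhood of $p$; hence $p\in A_p$. Consequently $E$, and therefore $\overline{E}$, is locally contained in proper analytic subsets, which are closed and nowhere dense --- incompatible with either alternative in the hypothesis on $E$, yielding the contradiction.

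The main obstacle I foresee is the coordination of the Zalcman rescalings across the whole set $E$ with a single extracted subsequence $\{f_j\}$, since Theorem \ref{new} only guarantees the rescaling data along a subsequence that may depend on $p$; the diagonal extraction and the $p$-indexed re-labeling that absorbs these subsequences must be organised carefully (the statement is compatible with this by letting $w_{j,p}$ and $\rho_{j,p}$ carry the $p$-dependence rather than the sequence itself). A secondary subtlety is the sufficiency step in the case where $E$ is non-analytic but nowhere dense: `non-analytic' must here be interpreted as `not locally contained in any proper analytic subset of any neighborhood' in order for the Baire/nowhere-dense contradiction to be immediate from $p\in A_p$.
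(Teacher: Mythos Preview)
Your route differs substantively from the paper's. For the forward direction the paper first builds a classification of ``bad'' points --- $\mu_1$-points (where the Marty-type bound of Theorem~\ref{Aladro} blows up along a sequence), and then $q$- and $\lambda$-points (points lying in a set of $\mu_1$-points whose closure has interior, respectively in a non-analytic set of $\mu_1$-points) --- and proves as a separate lemma (Lemma~\ref{lem 3}) that quasi-normality is equivalent to no sequence possessing a $q$- or $\lambda$-point. That lemma hands over the set $E=V$ of $\mu_1$-points directly, after which the paper runs an inductive Cantor diagonal over a countable collection of $\mu_1$-points accumulating at a fixed $p_0$, invoking the local Zalcman Lemma~\ref{local} at each stage and extracting the final sequence as the diagonal $\{h_{i,i}\}$. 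You instead diagonalize once against a countable basis of open sets to manufacture a maximal ``good'' open set $\Omega_0$ and take $E=\Omega\setminus\Omega_0$; this avoids the auxiliary Lemmas~\ref{lem 1}--\ref{lem 3} altogether and is more self-contained, at the price of making the verification that $E$ satisfies the stated dichotomy slightly more ad hoc. Both arguments ultimately feed into the same local rescaling engine (Lemma~\ref{local}/Theorem~\ref{new}).

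There is, however, a genuine soft spot in your converse that you should patch (the paper's two-line converse passes over the same point). From ``$\{f_{j_k}\}$ is not normal in any neighborhood of $p$'' you infer $p\in A_p$; but quasi-regularity allows $\{f_{j_k}\}$ to be \emph{compactly divergent} on $U_p\setminus A_p$, and a compactly divergent sequence is likewise non-normal in the sense of Definition~2.2. So non-normality at $p$ does not by itself force $p$ into the analytic exceptional set. When alternative $(ii)$ holds at $p$ you can still close the argument, since compact divergence of $\{f_{j_k}\}$ near $p$ would force the rescalings $g_{j_k}$ to diverge compactly as well (because $w_{j_k,p}+\rho_{j_k,p}\xi\to p$ uniformly on compacta), contradicting their convergence to the non-constant $g_p$. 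But when only alternative $(i)$ holds at $p$ this line gives no contradiction, and you need either a further argument or a sharper reading of the hypotheses in which the single sequence $\{f_j\}$ witnesses non-normality \emph{without any compactly divergent subsequence} at every $p\in E$.
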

The following example will elucidate our result.
 \begin{example}
 Consider the family of holomorphic mappings $\{f_n(z_1, z_2)=z_1^n\}$ from $\C^2$ into $\C$ . Clearly    $f_n$ is not normal in $E=\{(z_1, z_2): |z_1|=1\}.$  Therefore $\{f_n\}$ is not quasi-normal in $\C^2$. To see this  fix $0\leq\theta<2\pi$ and consider the  sequences $z_j=e^{\i\theta/j}$, $\rho_j=1/j$. It can be seen easily that $g_j(\zeta)=f_j(z_j+\rho_j\zeta)$ converges to non-constant holomorphic mapping $ e^{\zeta +\i\theta}.$
\end{example}

 \section{Proof of Main Result}
 Before giving the proof of our main result (Theorem \ref{Main Thm}), we give some definitions and lemmas, whose one dimensional analogue can be found in \cite{CTC,Nevo}. Throughout section 3, $\Omega\subseteq\C^n$ is a hyperbolic domain and $M$ denotes a complete complex Hermitian manifold.
 Here we extend the notions of $\mu_1-$point and $\mu_2-$point of a sequence $\{f_j\}\subset$ {Hol}$(\Omega, M)$.

 \begin{definition}
Let $\Omega \subseteq \C^n$ be a hyperbolic domain. Let $M$ be a complete complex Hermitian manifold of dimension $k$. Consider a sequence  $\{f_j\}\subset \text{Hol}(\Omega, M)$. A point $p_0\in \Omega$ is said to be a {\it  $\mu_1-$point } of $\{f_j\}$, if for each subset $K\Subset \Omega$ containing $p_0,$
$$
\lim_{j\rightarrow\infty}\sup_{p\in K, \|\xi\|=1}|E_M(f_j(p), (f_j)_*(p).\xi)|=\infty.
$$
\end{definition}

\begin{enumerate}
\item{A point  $p_0$ is called a {\it  $\mu_2-$point} of $\{f_j\}$ if there exists an analytic set   $K\subset\Omega$ of codimension at most $1$, containing $p_0$, such that each point of $K$ is a $\mu_1-$point of $\{f_j\}$.}
  \item{We say $p_0$ is a {\it  $q-$point} of $\{f_j\}$ if there exists a subset   $K\subset\Omega$, containing $p_0$, such that closure  $\overline K$ has non-empty interior and each point of $K$ is a $\mu_1-$point of $\{f_j\}$.}
\item{ We say $p_0$ is an {\it $\lambda-$point} of $\{f_j\}$ if there exists a non-analytic subset $K\subset\Omega$ containing $p_0$ such that each point of $K$ is a $\mu_1-$point of $\{f_j\}$.}
\end{enumerate}

\begin{example}
Let $\{f_n\}$ be a family of holomorphic mappings from $\C^2$ on to itself  such that $f_n(z)=n z$, where $z=(z_1, z_2).$  Then $z =(0, 0)$ is a $\mu_1-$point of $\{f_n\}.$
\end{example}
\begin{example}
 Let $\{f_n\}$ be a family of holomorphic mappings defined on the polydisc $D=\{(z_1, z_2) : |z_1|<1 \ { \text{and}\ } |z_2|<1 \}$ such that $f_n(z_1, z_2)= nz_1z_2.$ Then each point of $E=\{(z_1, z_2) : z_1z_2=0\}$ is a $\mu_2-$point of $\{f_n\}.$
\end{example}
\begin{example}
 Let $\{f_n\}$ be a family of holomorphic mappings defined on $\C^2$ such that $f_n(z_1, z_2)=e^{nz_1}.$ Then each point of $E=\{(z_1, z_2) : \Re{z_1}=0\}$ is an $\lambda-$point of $\{f_n\}.$
\end{example}
 \begin{lemma}\label{lem 1}
 Let $\Omega \subseteq \C^n$ be a hyperbolic domain. Let $M$ be a complete complex Hermitian manifold of dimension $k$. A family $\fr\subset \emph{Hol}(\Omega, M)$ is normal in $\Omega$ if and only if each sequence $\{f_j\}$ of $\fr$ has no $\mu_1-$point in $\Omega.$
 \end{lemma}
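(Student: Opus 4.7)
The plan is to derive this characterization directly from Theorem~\ref{Aladro} (Aladro--Krantz), which ties normality of $\fr$ to the uniform metric bound
\[ |E_M(f_\alpha(z);(f_\alpha)_*(z).\xi)|\le C_L\,F_K^\Omega(z,\xi) \]
on every compact $L\Subset\Omega$. Two basic properties of the Kobayashi infinitesimal metric will be used throughout: (i) an upper bound $F_K^\Omega(p,\xi)\le M_K\|\xi\|$ on each compact $K\Subset\Omega$, obtained by comparing with small Euclidean balls $B(p,r)\subset\Omega$; and (ii) a uniform lower bound $F_K^\Omega(p,\xi)\ge c_L\|\xi\|$ on each compact $L\Subset\Omega$, obtained by covering $L$ with hyperbolic neighborhoods and extracting a finite subcover.

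For the forward implication, assume $\fr$ is normal, fix a sequence $\{f_j\}\subset\fr$ and any $p_0\in\Omega$, and let $K\Subset\Omega$ be a compact neighborhood of $p_0$. Theorem~\ref{Aladro} supplies a constant $C_K$ with $|E_M(f_j(p);(f_j)_*(p).\xi)|\le C_K\,F_K^\Omega(p,\xi)$ for every $j$, every $p\in K$, and every $\xi\in\C^n$; combined with (i) this yields $\sup_{p\in K,\|\xi\|=1}|E_M(f_j(p);(f_j)_*(p).\xi)|\le C_K M_K$ uniformly in $j$, so $p_0$ cannot be a $\mu_1$-point of $\{f_j\}$.

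For the reverse direction I would argue contrapositively. Suppose $\fr$ is not normal. Then the estimate in Theorem~\ref{Aladro} must fail on some compact $L\Subset\Omega$, so one can select $\{f_j\}\subset\fr$, points $z_j\in L$, and unit vectors $\xi_j\in\C^n$ satisfying $|E_M(f_j(z_j);(f_j)_*(z_j).\xi_j)|\ge j\,F_K^\Omega(z_j,\xi_j)\ge j\,c_L$ by~(ii). Passing to a subsequence so that $z_j\to z_0\in L$, every compact $K\Subset\Omega$ containing $z_0$ eventually contains $z_j$, hence $\sup_{p\in K,\|\xi\|=1}|E_M(f_j(p);(f_j)_*(p).\xi)|\to\infty$ and $z_0$ is a $\mu_1$-point of $\{f_j\}$.

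The main subtlety lies in the converse half of Theorem~\ref{Aladro}: as stated it requires the auxiliary hypothesis that $\{f_\alpha(p)\}$ lie in a compact set of $M$ for some $p$, so strict failure of normality could conceivably come from a compactly divergent sequence even while the metric inequality holds. To cleanly deduce the failure of (2.8) from the failure of normality one invokes the version of the Aladro--Krantz criterion in which compactly divergent sequences are admitted as normal limits -- the standard convention in several complex variables, compatible with the framing of Theorem~\ref{new} used later in the paper. With that point handled, the remaining argument is routine bookkeeping with the Kobayashi metric.
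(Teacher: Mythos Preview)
Your argument follows the same route as the paper's: both directions are read off directly from the Aladro--Krantz criterion (Theorem~\ref{Aladro}). The only difference is organizational, in the converse direction. The paper first localizes the failure of normality at a point $p_0$, chooses shrinking closed balls $\overline{\Gamma}_k=\{\,\|p-p_0\|\le r_k\,\}$ together with thresholds $N_k\to\infty$, and then inductively extracts indices $j_k$ with
\[
\sup_{p\in\overline{\Gamma}_k,\ \|\xi\|=1}\bigl|E_M\bigl(f_{j_k}(p);(f_{j_k})_*(p).\xi\bigr)\bigr|>N_k,
\]
from which the $\mu_1$-condition at $p_0$ for $\{f_{j_k}\}$ follows by monotonicity of the supremum. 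You instead let the bound blow up on a single compact $L$, take witnesses $z_j\in L$, and use compactness of $L$ to pass to a subsequence with $z_j\to z_0$; the resulting $z_0$ plays the role of the paper's $p_0$. The content is identical, and your version is arguably a bit more streamlined.

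Your final paragraph correctly isolates the one genuine subtlety in either proof: the step ``$\fr$ not normal $\Rightarrow$ the metric bound fails on some compact set'' is the \emph{contrapositive of the converse} in Theorem~\ref{Aladro}, and that converse, as stated, carries the extra hypothesis that $\{f_\alpha(p)\}$ lies in a compact set of $M$ for some $p$. Without some convention admitting compactly divergent sequences as ``normal'' (as in the framing of Theorem~\ref{new}), a compactly divergent family could in principle violate normality while still satisfying the bound. The paper's own proof passes over this point without comment, so your treatment is at least as careful as the original.
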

 \begin{proof}
Suppose $\fr$ is  normal then by Theorem \ref{Aladro} there is no $\mu_1-$point for any sequence $\{f_j\}$ of $\fr$.\\

Conversely, suppose no sequence has a $\mu_1-$point in $\Omega$. Assume, on  the  contrary, that $\fr$ is not normal in $\Omega$. Consider a sequence $\{f_j\}$ of $\fr$, then there is a point $p_0\in \Omega$ such that we can not find a ball $\Gamma =\{p: \|p-p_0\|<r\}$, $\overline{\Gamma}\Subset\Omega$ and a number $N>0$ such that for $j\geq1$ we have
\begin{equation}\notag
|E_M(f_j(p); (f_j)_*(p).\xi)|\leq N \ \text{ in \ } \overline{\Gamma}.
\end{equation}
Take two sequences of positive real numbers $\{r_k\}\rightarrow 0$ and $\{N_k\}\rightarrow \infty$ such that the ball $\overline{\Gamma}_k= \{p: \|p-p_0\|\leq r_k\}$ is contained in $\Omega.$ Then there is an integer $j_1\geq1$ such that
$$\sup_{p\in \overline{\Gamma}_1, \|\xi\|=1}|E_M(f_{j_1}(p); (f_{j_1})_*(p).\xi)|>N_1.$$

Next there is an integer $j_2>j_1$ such that $$\sup_{p\in \overline{\Gamma}_2, \|\xi\|=1}|E_M(f_{j_2}(p); (f_{j_2})_*(p).\xi)|>N_2.$$
Continuing in this manner, we get a sequence of integers $\{j_k\}, (k=1,2,\ldots)$ such that for $k\geq1,$ we have $$\sup_{p\in \overline{\Gamma}_k, \|\xi\|=1}|E_M(f_{j_k}(p); (f_{j_k})_*(p).\xi)|>N_k.$$\\

Now consider a ball $\displaystyle{\Gamma: \|p-p_0\|<r}\ \text{ such that \ }  \overline{\Gamma}\Subset\Omega.$ Let $k_0\geq 1$ be an integer such that $r_k<r$ for $k\geq k_0,$ then for $k\geq k_0$ we have $$N_k < \sup_{p\in \overline{\Gamma}_k, \|\xi\|=1}|E_M(f_{j_k}(p); (f_{j_k})_*(p).\xi)|\leq\sup_{p\in \overline{\Gamma}, \|\xi\|=1}|E_M(f_{j_k}(p); (f_{j_k})_*(p).\xi)|.$$ Hence $$\lim_{k\rightarrow \infty}\sup_{p\in \overline{\Gamma}, \|\xi\|=1}|E_M(f_{j_k}(p); (f_{j_k})_*(p).\xi)|=\infty.$$

This implies $p_0$ is a $\mu_1-$point of the sequence $\{f_j\}$ which is a contradiction.
\end{proof}
\begin{lemma}\label{lem 2}
Let $\Omega \subseteq \C^n$ be a hyperbolic domain. Let $M$ be a complete complex Hermitian manifold of dimension $k$. A family $\fr\subset \emph{Hol}(\Omega, M)$ is weakly-normal in $\Omega$ if and only if each sequence $\{f_j\}$ of $\fr$ has neither a $\mu_2-$point nor a $\lambda-$point in $\Omega.$
\end{lemma}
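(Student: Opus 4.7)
The plan is to mirror the proof of Lemma \ref{lem 1}, tracking exactly where the Hermitian derivative fails to be bounded: for a weakly-regular sequence, the $\mu_1$-set is confined to the exceptional analytic set of codimension at least $2$ on each local patch, and this confinement is incompatible with containing either a codimension-$\leq 1$ analytic subset (ruling out $\mu_2$-points) or a non-analytic subset (ruling out $\lambda$-points).

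For the forward direction, assume $\fr$ is weakly-normal and take $\{f_j\}\subset\fr$. Argue by contradiction: suppose $p_0$ is a $\mu_2$-point, so there is a codimension-$\leq 1$ analytic set $K\ni p_0$ each of whose points is a $\mu_1$-point of $\{f_j\}$. Since the divergence $\sup \to \infty$ defining a $\mu_1$-point persists to every subsequence, replacing $\{f_j\}$ by a weakly-regular subsequence $\{f_{j_k}\}$ keeps $K$ inside the $\mu_1$-set. Apply weak regularity at $p_0$: there exist a connected neighborhood $U$ and an analytic set $E\subset U$ of codimension at least $2$ on whose complement $U\setminus E$ the subsequence either converges uniformly or compactly diverges. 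Since $\dim_{\C}(K\cap U)\geq n-1$ exceeds $\dim_{\C} E$, one can pick $q\in (K\cap U)\setminus E$ and a closed ball $B\Subset U\setminus E$ centered at $q$. In the convergence case, Cauchy's estimates give uniform convergence of $(f_{j_k})_*$ on $B$, so $\sup_{p\in B,\|\xi\|=1}|E_M(f_{j_k}(p);(f_{j_k})_*(p).\xi)|$ is bounded, contradicting $q$ being a $\mu_1$-point. In the compact-divergence case, the image $f_{j_k}(B)$ eventually avoids any fixed compact set of $M$; passing to a relatively compact neighborhood of infinity and applying Theorem \ref{Aladro} to the resulting hyperbolic target yields the same derivative bound and again a contradiction. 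The $\lambda$-point case is treated identically, using that a non-analytic subset of $\mu_1$-points cannot be contained in the codimension-$\geq 2$ analytic set $E$.

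For the converse, assume no sequence in $\fr$ has a $\mu_2$- or $\lambda$-point. Fix $\{f_j\}\subset\fr$ and let $\Sigma$ be its $\mu_1$-set, which is closed in $\Omega$. The absence of $\mu_2$-points forbids $\Sigma$ from containing any codimension-$\leq 1$ analytic subset, and the absence of $\lambda$-points forbids it from containing any non-analytic subset; together these force $\Sigma$ to be, locally at each $p\in\Omega$, contained in an analytic set $E_p\subset U_p$ of codimension at least $2$. On $\Omega\setminus\Sigma$ the sequence has no $\mu_1$-point, so Lemma \ref{lem 1} makes $\fr$ normal there. A diagonal extraction along a compact exhaustion of $\Omega\setminus\Sigma$ produces a subsequence $\{f_{j_k}\}$ converging uniformly on compacts of $\Omega\setminus\Sigma$ (with compactly divergent components included on the same footing). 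The patches $(U_p, E_p)$ certify that $\{f_{j_k}\}$ is weakly-regular, so $\fr$ is weakly-normal.

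The main obstacle is the compact-divergence branch in the forward direction: unlike uniform convergence, compact divergence does not directly bound the Hermitian derivative (as one-variable examples such as $f_j(z)=jz+j^2$ on $\Delta$ make plain), so closing the argument requires exploiting completeness of $M$ to produce a hyperbolic region of $M$ that eventually captures the images $f_{j_k}(B)$ and re-invoking Theorem \ref{Aladro} to recover the needed derivative bound. A secondary, more bookkeeping, issue is the local-to-global patching in the converse, where the codimension-$\geq 2$ confinement of $\Sigma$ must combine with normality off $\Sigma$ to yield a single globally weakly-regular subsequence.
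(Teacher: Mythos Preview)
Your skeleton matches the paper's proof closely: in the forward direction both of you pass to a weakly-regular subsequence, pick a neighborhood $U$ with exceptional set $E$ of codimension $\geq 2$, and use the dimension mismatch between $E$ and the $\mu_1$-set $K$ (codimension $\leq 1$, or non-analytic) to find a point of $K$ in $U\setminus E$; in the converse both of you argue that the set of $\mu_1$-points of a given sequence must, by hypothesis, avoid containing either a codimension-$\leq 1$ analytic set or a non-analytic set, leaving only a codimension-$\geq 2$ exceptional locus off of which Lemma~\ref{lem 1} gives normality. The paper's converse is stated tersely by contradiction, whereas you spell out the diagonal extraction needed to produce a single globally weakly-regular subsequence; that extra bookkeeping is fine and arguably necessary.

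The one substantive divergence is your treatment of the compact-divergence branch in the forward direction. The paper simply writes that on $U\setminus E$ the subsequence ``converges or diverges compactly hence $\{f_{j_k}\}$ is normal in $U\setminus E$'' and then invokes Lemma~\ref{lem 1} to conclude there are no $\mu_1$-points there; it does \emph{not} separate the two cases or attempt any derivative estimate in the divergent case. You are right to be uneasy here---your example $f_j(z)=jz+j^2$ on $\Delta$ into $\mathbb{C}$ shows that compact divergence alone does not bound the Hermitian derivative, so the passage from ``compactly divergent'' to ``no $\mu_1$-point'' via Lemma~\ref{lem 1} is not automatic under the paper's own Definition~2.2 of normality. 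However, your proposed remedy (a ``hyperbolic neighborhood of infinity'' in $M$ to which Theorem~\ref{Aladro} would apply) is not well-defined for a general complete Hermitian manifold and does not close the gap. In short: your approach is the paper's approach, and the obstacle you flag is a genuine soft spot that the paper elides rather than resolves; your patch does not fix it either.
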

\begin{proof}
 Suppose that $\fr$ is weakly-normal in $\Omega$. Let $\{f_j\}$ be a sequence of functions of $\fr$.  Then  we can extract a weakly-regular subsequence $\{f_{j_k}\}$ from $\{f_j\}$.  On the contrary we assume that $\{f_j\}$ has a $\mu_2-$point $p_0$ in $\Omega$ . Since $\fr$ is weakly-normal  therefore we can find  a neighborhood $U_0$ of $p_0$ in $\Omega$ such that $\{f_{j_k}\}$ converges uniformly  on compact subsets of $U_0\setminus E,$ or diverges compactly on $U_0\setminus E$, where $E$ is an analytic subset of $U_0$ of codimension at least 2. For each $p'\in U_0\setminus E$, $\{f_{j_k}\}$ converges or diverges compactly hence $\{f_{j_k}\}$ is normal in $U_0\setminus E$ so by Lemma \ref{lem 1}, $\{f_{j_k}\}$ has no $\mu_1-$point in $U_0\setminus E.$ But by definition of $\mu_2-$point there exist an analytic set $K\subset\Omega$ of codimension at most 1, such that each point of $K$ is a $\mu_1-$point of $S$ and hence of  $S'$ which is a contradiction. Similar argument can be given if $p_0$ is $\lambda-$point.\\

 Conversely, suppose that $\fr$ has neither a $\mu_2-$point nor a $\lambda-$point and  $\fr$  is not weakly-normal on $\Omega.$ Consider a set $K\Subset\Omega$.  Let $\{f_j\}$ be a sequence of functions of  $\fr$ then we can not extract a subsequence which is weakly-regular in $K$. Then $\{f_j\}$ must have $\mu_1-$points in $K$, also set $V$ of all $\mu_1-$points contains either a non-empty analytic subset $V_1\subset\Omega$ of codimension at most 1 or a non-analytic set $V_2\subset\Omega$, otherwise $\{f_j\}$ constitutes a weakl-normal family. Since $V_1$ is a set of codimension at most 1 then for $p\in V_1$ there exists a neighborhood $N_1$ of $p$  and each point of $N_1\cap V_1$ is a $\mu_1-$point of $S$, thus $p$ is a $\mu_2-$point of $S$. Also $V_2$ is a non-analytic set then for $p\in V_2$ there exists a neighborhood $N_2$ of $p$  and each point of $N_2\cap V_2$ is a $\mu_1-$ point of $S$, thus $p$ is a $\lambda-$point of $S$. In either case we get a contradiction.
\end{proof}
In the same lines we can prove the following result:
\begin{lemma}\label{lem 3}
Let $\Omega \subseteq \C^n$ be a hyperbolic domain. Let $M$ be a complete complex Hermitian manifold of dimension $k$. A family $\fr\subset \emph{Hol}(\Omega, M)$ is quasi-normal in $\Omega$ if and only if each sequence $\{f_j\}$ of $\fr$ has neither a $q-$point nor a $\lambda-$point in $\Omega.$
\end{lemma}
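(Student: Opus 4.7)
My plan is to mirror the proof of Lemma~\ref{lem 2} almost verbatim, substituting quasi-regularity for weak-regularity and the notion of a $q$-point for that of a $\mu_2$-point, and replacing the codimension-$\geq 2$ condition by mere properness of the analytic exceptional set. Both implications are run by contradiction, with Lemma~\ref{lem 1} providing the bridge between ``normal on an open piece of $\Omega$'' and ``no $\mu_1$-points on that piece''.

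\textbf{Forward direction.} Assume $\fr$ is quasi-normal. Given $\{f_j\}\subset\fr$, extract a quasi-regular subsequence $\{f_{j_k}\}$ and recall that any $\mu_1$-point of $\{f_j\}$ is automatically a $\mu_1$-point of every subsequence. Suppose, toward contradiction, $p_0$ is a $q$-point with witness $K\ni p_0$ and $W:=\mathrm{int}(\overline{K})\neq\emptyset$. Choose $q\in W$; quasi-regularity at $q$ yields a neighborhood $U_q$ and a proper analytic subset $E_q\subset U_q$ such that $\{f_{j_k}\}$ either converges uniformly on compacta of $U_q\setminus E_q$ or compactly diverges there. In either case $\{f_{j_k}\}$ generates a normal family on $U_q\setminus E_q$, so by Lemma~\ref{lem 1} it admits no $\mu_1$-point in $U_q\setminus E_q$. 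Hence $K\cap U_q\subseteq E_q$; taking closures inside $U_q$ gives $\overline{K}\cap U_q\subseteq E_q$, so $W\cap U_q$ is a non-empty open subset of the nowhere-dense set $E_q$, a contradiction. The $\lambda$-point case runs identically: the non-analytic witness $K$ must satisfy $K\cap U_p\subseteq E_p$ at each $p\in K$, and this local containment is to be converted into a contradiction with the non-analyticity of $K$.

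\textbf{Converse direction.} Suppose $\fr$ has neither a $q$-point nor a $\lambda$-point yet is not quasi-normal, and pick $\{f_j\}\subset\fr$ admitting no quasi-regular subsequence. Let $V$ be the set of $\mu_1$-points of $\{f_j\}$. The absence of $q$-points forces $\overline{V}$ to have empty interior, while the absence of $\lambda$-points forces $V$ to be locally analytic, hence locally a proper analytic subset. I would then exhaust $\Omega$ by compact sets $K_\ell$, cover each $K_\ell$ by finitely many neighborhoods $U_i$ on which $V\cap U_i\subseteq E_i$ for $E_i$ proper analytic in $U_i$, and apply Lemma~\ref{lem 1} on each $U_i\setminus E_i$ (free of $\mu_1$-points) to extract a subsequence normal on $U_i\setminus E_i$. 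A diagonal extraction across the covers of the exhausting compacta then yields a subsequence quasi-regular on all of $\Omega$, contradicting the choice of $\{f_j\}$.

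\textbf{Main obstacle.} The genuinely delicate step is the $\lambda$-point half of the forward direction, because the inclusion ``$K$ is locally contained in a proper analytic $E_p$'' does not, as a matter of set theory, prevent $K$ from being non-analytic: arbitrary subsets of analytic sets need not be analytic. I plan to patch this by enlarging the witness to its analytic envelope $\bigcup_p E_p$, which is a locally analytic set still consisting of $\mu_1$-points (by maximality of the $\mu_1$-set and closedness of $E_p$); the existential clause in the definition of a $\lambda$-point then allows one to replace $K$ by this envelope and derive the contradiction directly from analyticity. The same subtlety surfaces, in a milder form, in the converse when deducing the local analyticity of $V$ from the mere nonexistence of a non-analytic subset of $\mu_1$-points, and it will be addressed in the same spirit.
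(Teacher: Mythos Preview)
Your plan is exactly the paper's: the paper's entire proof of Lemma~\ref{lem 3} is the single sentence ``In the same lines we can prove the following result,'' pointing back to Lemma~\ref{lem 2}, and you carry out precisely that transcription. The $q$-point half of the forward direction and the overall architecture of the converse are handled just as the paper (implicitly) does.

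Where you depart from the paper is in flagging the $\lambda$-point half of the forward direction as delicate. You are right that the paper's ``similar argument'' is not literally similar---knowing that the non-analytic witness $K$ satisfies $K\cap U_p\subseteq E_p$ for proper analytic $E_p$ does not, by itself, contradict the non-analyticity of $K$, since subsets of analytic sets need not be analytic. However, your proposed patch does not close this gap. You want to replace $K$ by the ``analytic envelope'' $\bigcup_p E_p$ and claim this still consists entirely of $\mu_1$-points ``by maximality of the $\mu_1$-set and closedness of $E_p$.'' There is no such maximality principle available: the exceptional set $E_p$ supplied by quasi-regularity is just \emph{some} proper analytic set off of which the subsequence converges or diverges compactly; nothing prevents $E_p$ from containing points that are not $\mu_1$-points (indeed one may always enlarge $E_p$ gratuitously, and conversely a subsequence that happens to converge on all of $U_p$ has $E_p=\emptyset$). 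So you cannot feed $\bigcup_p E_p$ back into the definition of a $\lambda$-point. By contrast, your worry about the converse is milder than you suggest: ``no $\lambda$-point'' literally says that \emph{every} set of $\mu_1$-points is analytic, so in particular the full $\mu_1$-set $V$ itself is analytic, and then ``no $q$-point'' forces $\overline V$ to have empty interior, giving the proper analytic exceptional set you need without further work.
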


We now give the  Local version of Zalcman's Lemma for Normal families.
\begin{lemma}\label{local}
Let $\Omega \subseteq \C^n$ be a hyperbolic domain. Let $M$ be a complete complex Hermitian manifold of dimension $k.$ Let $\mathcal F=\{f_{\alpha}\}_{\alpha\in A}\subseteq \emph{Hol}(\Omega, M).$ The family $\mathcal F$ is not normal at $p_0\in \Omega$ if and only if there exist
\begin{enumerate}
\item[$(a)$]{a sequence $\{p_j\}\subset\Omega$ such that $p_j\rightarrow p_0.$}
\item[$(b)$]{a sequence of functions $\{f_j\}\subset\fr,$}
\item[$(c)$]{a sequence of positive real numbers $\rho_j\rightarrow 0$ such that}
\end{enumerate}
\begin{center}
$g_j(\xi)=f_j(p_j+\rho_j\xi), \quad \xi\in \C^n$
\end{center}
satisfies one of the following two assertions.
\begin{enumerate}
\item[(i)]{The sequence $\{g_j\}$ is compactly divergent on $\C^n.$}
\item[(ii)]{The sequence $\{g_j\}$ converges uniformly on compact subsets of $\C^n$ to a non-constant holomorphic map  $g:\C^n\rightarrow M.$}
\end{enumerate}
\end{lemma}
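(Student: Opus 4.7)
My plan is to prove the two directions separately. The sufficient direction follows a Marty-type contradiction using Theorem \ref{Aladro}, while the necessary direction combines the global rescaling of Theorem \ref{new} with a diagonalization over shrinking neighborhoods of $p_0$.

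For the sufficient direction, I would assume the sequences $p_j\to p_0$, $f_j\in\fr$, $\rho_j\to 0^+$ exist with $g_j$ satisfying (i) or (ii), and suppose for contradiction that $\fr$ is normal on some neighborhood $V$ of $p_0$. Theorem \ref{Aladro} then supplies, for each $L\Subset V$, a constant $C_L$ with $|E_M(f(z);f_*(z).\xi)|\le C_L F_K^V(z,\xi)$ for all $f\in\fr$, $z\in L$, $\xi\in\C^n$. The chain rule gives
\[
|E_M(g_j(\xi);(g_j)_*(\xi).\eta)|=\rho_j\,\bigl|E_M\bigl(f_j(p_j+\rho_j\xi);(f_j)_*(p_j+\rho_j\xi).\eta\bigr)\bigr|,
\]
which on any fixed compact $K\subset \C^n$ is eventually bounded by $\rho_j C_L\sup_L F_K^V(\cdot,\eta)\to 0$. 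Under (ii) this forces $g_*\equiv 0$, contradicting non-constancy of $g$; under (i) the relative compactness of $\fr|_V$ extracts a subsequence $f_{j_k}\to f$ locally uniformly on $V$, whence $g_{j_k}(0)=f_{j_k}(p_{j_k})\to f(p_0)$, contradicting compact divergence.

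For the necessary direction, assume $\fr$ is not normal at $p_0$. Then, for each $k\in\N$ with $1/k$ less than the distance from $p_0$ to $\partial\Omega$, the hyperbolic ball $V_k=B(p_0,1/k)\subset\Omega$ carries a non-normal restriction $\fr|_{V_k}$. Applying Theorem \ref{new} on $V_k$ yields sequences $\{p_j^{(k)}\}_j\subset V_k$, $\{f_j^{(k)}\}_j\subset\fr$, and $\rho_j^{(k)}\to 0^+$ with $p_j^{(k)}\to q_k\in V_k$ and $h_j^{(k)}(\xi):=f_j^{(k)}(p_j^{(k)}+\rho_j^{(k)}\xi)$ either compactly divergent on $\C^n$ or converging uniformly on compacts to a non-constant holomorphic $h^{(k)}\colon\C^n\to M$. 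Since $q_k\in V_k$, $q_k\to p_0$. After a subsequence in $k$, I assume the same alternative holds for every $k$. In the compactly-divergent case, fix $m_0\in M$ and choose $j_k$ with $h_{j_k}^{(k)}\bigl(\overline{B(0,k)}\bigr)\cap\overline{B_M(m_0,k)}=\emptyset$, $\|p_{j_k}^{(k)}-q_k\|<1/k$, and $\rho_{j_k}^{(k)}<1/k$. The diagonal $\tilde p_k=p_{j_k}^{(k)}\to p_0$, $\tilde\rho_k=\rho_{j_k}^{(k)}\to 0$, and $\tilde g_k=h_{j_k}^{(k)}$ is compactly divergent on $\C^n$, giving assertion (i).

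In the remaining convergent case, I would use the fact (readable from the proof of Theorem \ref{new}) that the Zalcman-style rescaling produces maps $h^{(k)}$ with a uniform upper bound on the Hermitian derivative on each compact subset of $\C^n$ and a uniform lower bound $|E_M(h^{(k)}(0);(h^{(k)})_*(0).\xi^{(k)})|\ge c>0$ in some unit direction. Provided $\{h^{(k)}(0)\}_k$ lies in a compact of $M$, the converse half of Theorem \ref{Aladro} makes $\{h^{(k)}\}_k$ normal, and a subsequential limit $h$ inherits the lower bound and is non-constant; otherwise $\{h^{(k)}\}$ is compactly divergent on $\C^n$, reducing to the previous case. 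Choosing $j_{k_l}$ so that $h_{j_{k_l}}^{(k_l)}$ approximates $h^{(k_l)}$ within $1/l$ on $\overline{B(0,l)}$ then yields the diagonal converging to $h$, establishing (ii). I expect the principal obstacle to be precisely this pair of uniform-in-$k$ bounds on $\{h^{(k)}\}$: they are not immediate from the statement of Theorem \ref{new} and require choosing weight functions intrinsic to each $V_k$ that scale coherently — this is the technical heart of the argument.
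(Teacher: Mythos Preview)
Your sufficient direction matches the paper's argument essentially line for line: the Marty-type bound from Theorem~\ref{Aladro} on a neighborhood of $p_0$ forces $|E_M(g_j(\xi);(g_j)_*(\xi).\eta)|\le \rho_j N\to 0$, killing case~(ii), and normality plus $g_{j_k}(0)\to f(p_0)$ kills case~(i).

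Your necessary direction, however, takes a different route and has the very gap you flag. You apply Theorem~\ref{new} separately on each shrinking ball $V_k$, producing limits $h^{(k)}$, and then try to extract a further limit from $\{h^{(k)}\}_k$. As you concede, this requires uniform-in-$k$ Marty bounds on the $h^{(k)}$ together with a uniform lower bound at the origin, and neither is delivered by the \emph{statement} of Theorem~\ref{new}; you would have to reopen its proof and track constants through domains $V_k$ that are themselves varying. This is not obviously salvageable without substantial extra work.

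The paper avoids the whole issue by a single pre-rescaling trick: from non-normality at $p_0$ it extracts $f_k\in\fr$ violating the Marty inequality on balls of radius $1/\sqrt{k}$ about $p_0$, then sets
\[
g_k(p)=f_k\Bigl(p_0+\tfrac{p}{\sqrt{k}}\Bigr),\qquad p\in\Delta=\{\|p\|<1\},
\]
and checks that $\{g_k\}$ is not normal on the \emph{fixed} domain $\Delta$. Now Theorem~\ref{new} is applied \emph{once} to $\{g_k\}$ on $\Delta$, yielding $p_j^*\in K\Subset\Delta$, $\rho_j^*\to 0$, and a subsequence $\{g_{k_j}\}$ with the desired dichotomy. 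Unwinding gives $p_j=p_0+p_j^*/\sqrt{k_j}$ and $\rho_j=\rho_j^*/\sqrt{k_j}$; since $p_j^*$ stays in a fixed compact, $p_j\to p_0$ comes for free, and no second limiting procedure or uniform-in-$k$ estimate is needed. This localizing rescaling is the key idea you are missing.
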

\begin{proof}
Assume  that $\mathcal F$ is not  normal at $p_0$,  then by Theorem ~\ref{Aladro} there exists a compact set $K_0\subset\{p:\|p-p_0\|\leq \rho\}=K_1$ for some $\rho>0$ and a sequence $f_j\subset \fr, \{q_j\}\subset K_0$ and $\{\xi_j\}\subset\C^n,$  such that
\begin{equation}\label{l3}
|E_M(f_j(q_j); (f_j)_*(q_j).\xi_j)|\geq jF_K^{\Omega}(q_j, \xi_j).
\end{equation}
Let $k_0\in \N$ be such that $\displaystyle{\frac{1}{\sqrt {k_0}}<\rho}$,  then for $k\geq k_0$ there is $f_k\in\fr$ with
\begin{equation}\label{l4}
|E_M(f_k(q_k); (f_k)_*(q_k).\xi_k)|\geq kF_K^{\Omega}(q_k, \xi_k),\ \text{ for all\ } k\geq k_0\ \text{and\ } q_i\in\left\{p: \|p-p_0\|\leq \frac{1}{2\sqrt k}\right\}.
\end{equation}

Now define  $$g_k(p)=f_k\left(p_0+\frac{p}{\sqrt k}\right).$$
 Each $g_k$ is defined on $\Delta=\{p: \|p\|< 1\}$ and satisfies
 \begin{align}
|E_M(g_k(q_k); (f_k)_*(q_k).\xi_k|&=\left|E_M\left(f_k\left(p_0+\frac{q_k}{\sqrt k}\right); \frac{1}{\sqrt k}(f_k)_*\left(p_0+\frac{q_k}{\sqrt k}\right).\xi_k\right)\right|\notag\\
&\geq \sqrt k F_K^{\Omega}(q_k, \xi),
 \end{align}
therefore $\{g_k\}$ is not normal in $\Delta$. Now by Theorem \ref{new} there exist
\begin{enumerate}
\item[$(1)$] {a compact set $K\Subset \Delta$,}
\item[$(2)$] {a sequence $\{p_j^*\}\subset K$,}
\item[$(3)$] {a sequence $\{g_{k_j}\}\subset \{g_k\}$,}
\item[$(4)$] {a sequence of positive real numbers $\rho_j^* \rightarrow 0$}
\end{enumerate}
such that $\displaystyle{h_{k_j}(\xi)=g_{k_j}\left(p_j^*+\rho_j^*\xi\right),\  \xi\in \C^n}$ either compactly divergent on $\C^n$ or  converges uniformly on compact subsets of $\C^n$ to a non-constant holomorphic map $g:\C^n\rightarrow M$. This is same as $\displaystyle{f_{k_j}\left(\frac{p_j^*}{\sqrt k_j}+\frac{\rho_j^*}{\sqrt{k_j}}\xi\right), \ \xi \in \C^n}$ either compactly divergent on $\C^n$ or  converges uniformly on compact subsets of $\C^n$ to a non-constant holomorphic map $g:\C^n\rightarrow M$. Now set $$\displaystyle{p_j=\frac{p_j^*}{\sqrt{k_j}}+p_0 \ \text{and\ } \rho_j=\frac{\rho_j^*}{\sqrt{k_j}}}.$$
This proves the necessity part of the Lemma.

Conversely, assume that  the conditions of the lemma are satisfied and suppose, on the contrary, that $\mathcal F$ is normal at $p_0$. Then by analogue of Marty's Theorem in $\C^n$, for compact subsets $K_0$ and $K_1$ with $p_0\in K_0\Subset K_1\Subset\Omega$, there exists a number $N>0$ such that
\begin{equation}\label{l1}
\sup_{p\in K_1,  \|\zeta\|=1}|E_M(f(p); (f)_{*}(p).\zeta)|\leq N, \quad  \text{for each\ } f\in \mathcal F.
\end{equation}

Now, suppose $g_j(\xi)=f_{j}(p_j+\rho_j\xi)$ converges uniformly on compact subsets of $\C^n$ to a non-constant holomorphic map $g:\C^n\rightarrow M$ we have
\begin{align}
|E_M(g_j(\xi); g_j'(\xi).\zeta)|&=|E_M(f_j(p_j+\rho_j\xi); \rho_j(f_j)_{*}(p_j+\rho_j\xi).\zeta)|\notag\\
&\leq \rho_jN. \label{l2}
\end{align}
Taking the limit, we get $$\lim_{j\rightarrow\infty}|E_M(g_j(\xi); g_j'(\xi).\zeta)|=|E_M(g(\xi); g'(\xi).\zeta)|=0.$$ Then $g'(\xi)=0$ for any $\xi\in \C^n,$ therefore $g$ is a constant function which is a contradiction.\\

Next, suppose that $g_j(\xi)=f_{j}(p_j+\rho_j\xi)$ is compactly divergent. Since the family is normal, without any loss of generality, we may assume that the sequence $\{f_j\}\rightarrow f.$ And we get $g_j(\xi)\rightarrow f(p_0)$, which is not possible as $\{g_j\}$ is compactly divergent.
This completes the proof.
\end{proof}
\begin{example}
Let $D=\{(z_1, z_2) : |z_1|<1 \ { \text{and}\ } |z_2|<1 \}$ be the polydisc in $\C^2$. We consider a family of holomorphic mappings $\{f_n\}$, from $D$ into $\mathbb{C}$ where $f_n(z_1, z_2)=e^{nz_1z_2}$, for all $n\in\N$. Since  $\{f_n\}$ has no subsequence which is convergent at any point in the set  $E=\{\{(\Re{(z_1)}, 0)\times (0,\Im{(z_2)})\}\cup \{(0, \Im{(z_1)})\times (\Re{(z_2)}, 0)\}\}\cap D$ so   $\{f_n \}$ is not normal in $D$. As $\{f_n\}$ is not normal at $(0, 0)$,  we get a sequence $\{p_n\}$ in $D$ such that $p_n=\left(\frac{z_1^0}{\sqrt{n}}, \frac{z_2^0}{\sqrt{n}}\right)$, where $(z_1^0, z_2^0)$ is a fixed point in $D$. Notice that $\{p_n\}\rightarrow (0, 0).$ Also we have a sequence of positive real numbers $\{\rho_n\}\rightarrow 0,$ where $\rho_n=\frac{1}{\sqrt n}$  such that for all $\xi=(z_1, z_2)\in \C^2$ we have
\begin{equation}\notag
 g_n(p_n+\rho_n\xi)=f_n(p_n+\rho_n\xi)\rightarrow e^{(z_1^0+z_1)(z_2^0+z_2)}.\end{equation}
 \end{example}
Now we are ready to prove Theorem \ref{Main Thm 1}.
\begin{proof}[The proof of Theorem \ref{Main Thm 1}]
Suppose all the conditions of the theorem  are satisfied. Since $E$ is  is either a non-analytic subset or  the closure $\overline E$ has non-empty interior then for each $p_0\in E$, we can get a sequence $p_j$ in $E$ such that $p_j\rightarrow p_0.$ By Lemma $\ref{local}$, $\fr$ is not normal at $p_0$. Since $p_0$ is an arbitrary point of $E$ and $E$ is either a dense subset  or a non-analytic subset of $\Omega$, $\fr$ is not quasi-normal in $\Omega.$\\

Conversely, suppose $\fr$ is not quasi-normal family in $\Omega$. Then by Lemma \ref{lem 3}, there exists a sequence $S'=\{h_j\}$ of $\fr$ which has either a $q-$point  or a $\lambda-$point $p_0\in \Omega$. This implies that there exists a subset $V\Subset\Omega$ which is either  dense   or a  non-analytic subset containing $p_0$ so that each point of $V$ is a $\mu_1-$point  of $S'$.   Since $V$ is either dense or  non-analytic,  we can choose a sequence of positive real numbers $\{r_i\}$ such that $\{r_i\}\rightarrow 0$ and for each open ball $B(p_0, r_i)=\{p\in \Omega : \|p-p_0\|<r_i\}$, the set $V\cap B(p_0, r_i)$ has at least one $\mu_1-$point.  Now we proceed inductively to get conditions of the theorem.\\
{\it\underline{Step 1.}}  There exists
\begin{enumerate}
\item[$(A_1)$] {a $\mu_1-$point $p_1\in \Omega$ such that $p_1\in V\cap B(p_0, r_1).$ So $S'$ is not normal at $p_1.$ Therefore, by Lemma \ref{local} there exist}
    \item[$(B_1)$]{ a sequence $\{w_{j,1}\}\subset\Omega$ such that $\{w_{j,1}\}\rightarrow p_1,$}
    \item[$(C_1)$]{a subsequence $S_1=\{h_{j,1}\}$ of $S_0$,}
    \item[$(D_1)$]{a sequence of positive real numbers $\{\rho_{j,1}\}\rightarrow 0,$ such that}
   \end{enumerate}
$\displaystyle{h_{j,1}(w_{j,1}+\rho_{j,1}\xi), \ \xi \in \C^n,}$ either compactly divergent on $\C^n$ or  converges uniformly on compact subsets of $\C^n$ to a non-constant holomorphic map $g_1:\C^n\rightarrow M$.\\
{\it\underline{Step 2.}} Since $p_0$ is also a $q-$point or a $\lambda-$point of $S_1$, there exists
\begin{enumerate}
\item[$(A_1)$] {a $\mu_1-$point $p_2\in \Omega$, $p_2\neq p_1,$  such that $p_2\in V\cap B(p_0, r_2), 0<r_2<r_1.$ So $S_1$ is not normal at $p_2.$ Therefore, by Lemma \ref{local} there exist}
    \item[$(B_1)$]{ a sequence $\{w_{j,2}\}\subset\Omega$ such that $\{w_{j,2}\}\rightarrow p_2,$}
    \item[$(C_1)$]{a subsequence $S_2=\{h_{j,2}\}$ of $S_1$,}
    \item[$(D_1)$]{a sequence of positive real numbers $\{\rho_{j,2}\}\rightarrow 0,$}, such that
    \end{enumerate}
$\displaystyle{h_{j,2}(w_{j,2}+\rho_{j,2}\xi), \ \xi \in \C^n,}$ either compactly divergent on $\C^n$ or  converges uniformly on compact subsets of $\C^n$ to a non-constant holomorphic map $g_2:\C^n\rightarrow M$.\\

 Continuing in this manner we get sequences $\{p_j\}\rightarrow p_0$, $\{w_{i,j}\}, \{\rho_{i,j}\}, \{g_j\} $ and $\{h_{i,j}\}$. Now we use the Cantor's diagonal method and choose $E = V$;  $f_i=h_{i,i};   w_{i,p_o}=w_{i,i};   \rho_{i,p_0}=\rho_{i,i}.$ Then for each $j\geq 1$, $\{f_i\}_{i=j}^{\infty}$ is a subsequence of $S_j$ and $f_i(w_{i,p_0}+\rho_{i,p_0}\xi), \xi\in \C^n,$ either compactly divergent on $\C^n$ or  converges uniformly on compact subsets of $\C^n$ to a non-constant holomorphic map $g_{p_0}:\C^n\rightarrow M$. This completes the proof of theorem.

\end{proof}
For the {\it weakly-normal} family we propose the following theorem.
\begin{theorem}\label{Main Thm}
Let $\Omega\subseteq\C^n$ be a hyperbolic domain. Let $M$ be a complete complex Hermitian manifold of dimension $k$. Let $\fr=\{f_{\alpha}\}_{\alpha\in A}\subseteq \ \text{\emph{Hol}}(\Omega, M)$. The family $\fr$ is not weakly-normal if and only if there exist a subset $E\subset\Omega$ which is either an analytic subset of codimension at most 1 or a non-analytic subset and corresponding to each $p\in E$ there exist
\begin{enumerate}
\item[$(a)$] {a sequence of points $\{w_{j,p}\}_{j=1}^{\infty}\subset \Omega$ such that $w_{j, p}\rightarrow p$.}
\item[$(b)$]{a sequence of functions $\{f_j\}\subset\fr,$}
\item[$(c)$]{a sequence of positive real numbers $\rho_{j, p}\rightarrow 0$, such that}
\end{enumerate}
\begin{center}
$g_j(\zeta)=f_j(w_{j, p}+\rho_{j, p}\xi), \quad \xi\in \C ^n\quad (p\in E)$
\end{center}
satisfies one of the following two assertions.
\begin{enumerate}
\item[(i)]{The sequence $\{g_j\}$ is compactly divergent on $\C^n$.}
\item[(ii)]{The sequence $\{g_j\}$ converges uniformly on compact subsets of $\C^n$ to a non-constant holomorphic map  $g_p:\C^n\rightarrow M.$}
\end{enumerate}
\end{theorem}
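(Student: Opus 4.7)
The plan is to mirror the argument for Theorem \ref{Main Thm 1}, replacing the $q$-point/$\lambda$-point dichotomy coming from Lemma \ref{lem 3} with the $\mu_2$-point/$\lambda$-point dichotomy coming from Lemma \ref{lem 2}. Structurally, the only change between the quasi-normal and weakly-normal setting is that ``$\overline{E}$ has non-empty interior'' is replaced throughout by ``$E$ is analytic of codimension at most $1$,'' which is exactly the distinction between a $q$-point and a $\mu_2$-point. The proof therefore decomposes into a sufficiency argument using Lemma \ref{local}, a necessity argument using Lemma \ref{lem 2}, and a Cantor diagonal extraction.

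For sufficiency, assume $E\subset\Omega$ has the stated form and that for each $p\in E$ the sequences $\{w_{j,p}\}$, $\{f_j\}$, $\{\rho_{j,p}\}$ exist and satisfy either (i) or (ii). By Lemma \ref{local}, $\fr$ fails to be normal at every $p\in E$, so every $p\in E$ is a $\mu_1$-point of a suitable subsequence of $\fr$. If $E$ is analytic of codimension at most $1$, then by definition every $p\in E$ is a $\mu_2$-point; if $E$ is non-analytic, then every $p\in E$ is a $\lambda$-point. In either case Lemma \ref{lem 2} immediately forces $\fr$ not to be weakly-normal.

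For necessity, assume $\fr$ is not weakly-normal. Lemma \ref{lem 2} produces a sequence $S'=\{h_j\}\subset\fr$ possessing either a $\mu_2$-point or a $\lambda$-point $p_0\in\Omega$. By the definitions, there is a subset $V\subset\Omega$ containing $p_0$ that is either analytic of codimension at most $1$ or non-analytic, and every point of $V$ is a $\mu_1$-point of $S'$. I then proceed exactly as in Theorem \ref{Main Thm 1}: choose $r_i\downarrow 0$ with $V\cap B(p_0,r_i)$ containing a fresh $\mu_1$-point $p_i$, and at stage $i$ apply Lemma \ref{local} at $p_i$ to the previously extracted subsequence $S_{i-1}$. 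This yields $\{w_{j,i}\}\rightarrow p_i$, a subsequence $S_i=\{h_{j,i}\}\subset S_{i-1}$, and $\rho_{j,i}\rightarrow 0^+$ so that $h_{j,i}(w_{j,i}+\rho_{j,i}\xi)$ either compactly diverges on $\C^n$ or converges uniformly on compact sets to a non-constant holomorphic map $g_i:\C^n\rightarrow M$. Running the same construction at every $p\in V$ and then setting $E=V$, $f_i=h_{i,i}$, $w_{i,p_0}=w_{i,i}$, $\rho_{i,p_0}=\rho_{i,i}$ produces the required data by Cantor's diagonal method.

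The main obstacle I foresee is the selection step: when $V$ is analytic of codimension at most $1$, one must verify that $V\cap B(p_0,r_i)$ contains infinitely many distinct $\mu_1$-points accumulating at $p_0$, so that the inductive extraction can be carried out honestly. For $n\geq 2$ this is automatic since a positive-dimensional analytic set through $p_0$ has $p_0$ as an accumulation point, while the non-analytic case is handled verbatim as in the proof of Theorem \ref{Main Thm 1}. A secondary technical point is that each chosen $p_i$ must remain a $\mu_1$-point of every subsequent subsequence $S_j$, which follows because the $\mu_1$-point property is inherited by subsequences straight from the definition. With these checks in place the argument is a faithful adaptation of the quasi-normal proof, with $\mu_2$-points playing the role previously played by $q$-points.
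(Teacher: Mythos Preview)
Your proposal is correct and follows essentially the same approach as the paper, which explicitly states that Theorem \ref{Main Thm} ``can be proven on the similar lines as of the proof of Theorem \ref{Main Thm 1} using Lemma \ref{lem 2} instead of Lemma \ref{lem 3}.'' You have in fact supplied more detail than the paper does, including the useful observations that $\mu_1$-points are inherited by subsequences and that an analytic set of codimension at most $1$ accumulates at each of its points when $n\geq 2$.
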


The proof of   Theorem \ref{Main Thm} is merely a formality. It can be proven on the similar lines as of the proof of Theorem \ref{Main Thm 1} using Lemma \ref{lem 2} instead of Lemma \ref{lem 3}.\\

The following examples elucidate Theorem \ref{Main Thm}.

\begin{example}
 Let $\{f_n\}$ be a family of holomorphic mappings defined on the polydisc $D=\{(z_1, z_2) : |z_1|<1 \ { \text{and}\ } |z_2|<1 \}$ such that $f_n(z_1, z_2)= nz_1z_2.$ Then $\{f_n\}$ is not weakly-normal on $D$, as $\{f_n\}$ converges compactly in $D\setminus E$, where $E=\{(z_1, z_2) : z_1z_2=0\}$ is an analytic subset of codimension 1 of $D$. Let $(z_1^0, z^0_2)\in E$ be  any arbitrary point, without loss of generality we take $z_1^0=0.$ Then we get a sequence $\{p_n\}\rightarrow (0, z_2^0)$ of points in $E$, where $p_n=\left(0, z_2^0+\frac{1}{\sqrt n}\right)$. Also we have a sequence of positive real numbers $\{\rho_n\}\rightarrow 0,$ where $\rho_n=\frac{1}{\sqrt n}$  such that for all $\xi= (z_1, z_2)\in \C^2$ and we get $$g_n(p_n+\rho_n\xi)=f_n(p_n+\rho_n\xi_n)\rightarrow z_2^0 z_1z_2.$$
\end{example}
\begin{example}
Let $\{f_n\}$ be a family of holomorphic mappings defined on the polydisc $D=\{(z_1, z_2) : |z_1|<1 \ { \text{and}\ } |z_2|<1 \}$ such that $f_n(z_1, z_2)= \cos(nz_1z_2).$ Then $\{f_n\}$ is not  weakly-normal on $D$ as $\{f_n\}$ is not compactly convergent in any open subset of $D$ containing $E=\{(z_1, z_2) : z_1z_2=0\},$  which is of codimension 1. Let $(z_1^0, z^0_2)$ be  any arbitrary point of $E$, without loss of generality we take $z_2^0=0.$ Then we get a sequence $\{p_n\}\rightarrow (z_1^0, 0)$ of points in $E$, where $p_n=\left( z_1^0+\frac{1}{\sqrt n}, 0\right)$. Also we have a sequence of positive real numbers $\{\rho_n\}\rightarrow 0,$ where $\rho_n=\frac{1}{\sqrt n}$ such that for all $\xi= (z_1, z_2)\in \C^2$ and we obtain $$g_n(p_n+\rho_n\xi)=f_n(p_n+\rho_n\xi)\rightarrow \cos(z_1^0z_1z_2).$$\\
\end{example}
Before ending the paper we give one more example of a family of holomorphic mappings which is not quasi-normal.
\begin{example}
Let $D=\{z\in\C: |z|<1 \}$. We consider a family of holomorphic mappings $\{f_n\}$ from $D$ into $\mathbb C$, where $f_n(z)=e^{nz}$. Since  $\{f_n\}$ has no subsequence which is convergent at any point in the set  $E=\{z: \Re{z}=0\}$ so   $\{f_n \}$ is not quasi-normal in $D$.
\end{example}
{\bf{Acknowledgement:}} We  would like to thank  Kaushal Verma and  Gautam Bharali, IISc Bangalore for stimulating discussions  about this work as well as for  critical comments.

\end{document}